\documentclass{amsart} 
\usepackage[paperheight=11in, 
    paperwidth=8.5in,
    outer=1.25in,
    inner=1.25in,
    bottom=1.4in,
    top=1.4in]{geometry} 
\usepackage[dvipsnames]{xcolor}
\usepackage{amsmath, amsxtra, amsthm, amsfonts, amssymb}
\usepackage{mathrsfs}
\usepackage{graphicx}
\usepackage{url}
\usepackage{bbm}
\usepackage{tikz-cd}
\usetikzlibrary{fit}

\usepackage[T1]{fontenc}
\usepackage{enumitem}
\usepackage{float}

 \usepackage[hidelinks,backref=page]{hyperref} 
 \hypersetup{
    colorlinks,
    citecolor=magenta,
    filecolor=magenta,
    linkcolor=blue,
    urlcolor=black
}

\usepackage{cleveref}

\newcommand{\RR}{\mathbb R}
\newcommand{\CC}{\mathbb C}

\newcommand{\ZZ}{\mathbb Z}
\newcommand{\be}{\mathbf{e}}
\newcommand{\M}{\mathrm{M}}
\newcommand{\kk}{\mathbbm{k}}
\renewcommand{\P}{\mathrm{P}}
\newcommand{\rk}{\operatorname{rk}}

\theoremstyle{definition}
\newtheorem{thm}{Theorem}[section]

\newtheorem{prop}[thm]{Proposition}
\newtheorem{defn}[thm]{Definition}

\newtheorem{eg}[thm]{Example}
\newtheorem{rem}[thm]{Remark}

\newtheorem{?}[thm]{Question}

\title{Induced Lorentzian and volume polynomials}

\author[Christopher Eur]{Christopher Eur}
\address{Department of Mathematical Sciences, Carnegie Mellon University, Pittsburgh, Pennsylvania 15213}
\email{ceur@andrew.cmu.edu}

\author[Nutan Nepal]{Nutan Nepal}
\address{Department of Mathematics, North Carolina State University, Rayleigh, North Carolina 27695}
\email{nnepal2@ncsu.edu}

\author[Daniel Qin]{Daniel Qin}
\address{Department of Mathematics, University of California, Davis, Davis, California 95616}

\email{dcqin@ucdavis.edu}
\begin{document}
\begin{abstract}
Suppose one has a party of $m$ people, whose expertise collectively covers $n$ topics.  Given a subset $T$ of the topics, one wishes to form a panel of $|T|$ people from the party such that $T$ can be covered by assigning a distinct topic to each panel member with the expertise.  We show that the numbers of such panels, as $T$ varies, form a Lorentzian polynomial.  We achieve this by showing that a certain linear operator on polynomials, which we call the ``inducing operator'' for its connection to induced (poly)matroids, preserves Lorentzian polynomials and realizable volume polynomials.
\end{abstract}
\maketitle

\vspace{-10pt}

\section{Introduction}

For a positive integer $m$, let $[m] = \{1, \dotsc, m\}$, and let $\mathcal S = (S_1, \dotsc, S_n)$ be a finite sequence of subsets of $[m]$.
For a subset $T \subseteq [n]$, consider the following matching statistic:
\begin{align*}
C_T(\mathcal S) := |\{B \subseteq [m] : \text{there is a bijection $\mu\colon T \to B$ with $\mu(t) \in S_t$ for all $t\in T$}\}|.
\end{align*}
We show that these quantities display the following log-concavity property.  For a nonnegative integer $r$, let $\binom{[n]}{r}$ be the set of cardinality-$r$ subsets of $[n]$, and for $T\subseteq [n]$, write $y^T = \prod_{t\in T} y_t$.

\begin{thm}\label{thm:main1}
For any integer $0\leq r \leq m$, the polynomial $f_{\mathcal S, r} := \sum_{T \in \binom{[n]}{r}}  C_T(\mathcal S) y^T$ is \emph{Lorentzian} in the sense of \cite{BH20}.  More strongly, it is a \emph{realizable volume polynomial} over any field in the sense of \cite{GHMSW2025volpres}.
\end{thm}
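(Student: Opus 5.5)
The plan is to realize $f_{\mathcal S,r}$ as the image of a simple Lorentzian polynomial under a linear operator, and then to show that this operator preserves Lorentzianity and realizable volume polynomials. For a multiaffine polynomial $g=\sum_{B\subseteq[m]} c_B x^B$ in variables $x_1,\dotsc,x_m$, define the \emph{inducing operator} (relative to $\mathcal S$) by
\[
\operatorname{Ind}_{\mathcal S}(g) := \sum_{T\subseteq [n]} \Bigl(\sum_{B \text{ matchable to } T} c_B\Bigr) y^T,
\]
where ``$B$ matchable to $T$'' means that a bijection $\mu\colon T\to B$ with $\mu(t)\in S_t$ exists. By definition $f_{\mathcal S,r}=\operatorname{Ind}_{\mathcal S}(e_r(x_1,\dotsc,x_m))$. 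The elementary symmetric polynomial $e_r$ is the basis generating polynomial of the uniform matroid $U_{r,m}$, so it is Lorentzian. It is also a realizable volume polynomial: take a general $r$-dimensional subspace of $\kk^m$, i.e.\ the realizable uniform matroid. Theorem~\ref{thm:main1} therefore reduces to two statements. First, $\operatorname{Ind}_{\mathcal S}$ preserves Lorentzian polynomials. Second, it preserves realizable volume polynomials. I would prove the second and deduce the first from it for the polynomial at hand, since realizable volume polynomials are Lorentzian by \cite{GHMSW2025volpres}. I would still keep a direct Lorentzian argument as a backup.

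\emph{Direct Lorentzian route.} I would use the characterization of \cite{BH20}: a homogeneous polynomial is Lorentzian if and only if its support is M-convex and every quadratic $(r-2)$-th partial derivative has at most one positive eigenvalue. The support of $f_{\mathcal S,r}$ consists of the $T\in\binom{[n]}{r}$ that are matchable to some $r$-subset of $[m]$. These are exactly the rank-$r$ independent sets of the transversal matroid of $\mathcal S$ (the matroid induced from the free matroid via the bipartite graph, by Rado's theorem), so the support is M-convex. For the derivatives, I would check that $\partial_{y_t}\operatorname{Ind}_{\mathcal S}(g)=\sum_{j\in S_t}\operatorname{Ind}_{\mathcal S\setminus t,\ \hat j}(\partial_{x_j} g)$. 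The right-hand side is defined by deleting topic $t$ and removing person $j$ from the remaining sets. This identity is not literally true, because of overcounting of $B$. The fix is to use a version restricted to a fixed matching or an inclusion–exclusion, and getting this right is the first technical point. Once it holds, induction reduces the eigenvalue condition to quadratics $\sum_{T\in\binom{[n]}{2}}C_T y^T$, where it becomes an explicit check on $2\times 2$ matching counts.

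\emph{Realizable route, which I expect to be the main obstacle and the real content.} Choose a generic $m\times n$ matrix $M$ over $\kk$ whose $(j,t)$ entry is nonzero exactly when $j\in S_t$, with independent transcendental entries after a base change if necessary. Then $B$ is matchable to $T$ if and only if the minor $\det M_{B,T}\neq 0$. Realizability of $e_r$ comes from a linear subspace $L\subseteq \kk^m$ (equivalently, from the wonderful or augmented variety of the uniform matroid). I would try to realize the volume polynomial of $\operatorname{Ind}_{\mathcal S}(g)$ by pulling back along the linear map $M^{\top}$, or by forming the graph of $M$ inside $\kk^m\times\kk^n$ and then projecting. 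The aim is to show that the intersection numbers $\deg(D_{t_1}\cdots D_{t_r})$ count exactly the $B$ with nonzero minor, each with multiplicity one. Multiplicity one is the delicate part: it is what distinguishes counting subsets $B$ from counting matchings, and naive substitution $x_j\mapsto\sum_{t\ni j}y_t$ produces the matching count instead. I would first try to get multiplicity one by a genericity/transversality argument using the distinct monomials in the Cauchy–Binet expansion of the minors. If that fails, I would fall back on the Lorentzian route above for the first assertion of the theorem.
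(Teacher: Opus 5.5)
Your reduction matches the paper's. $\operatorname{Ind}_{\mathcal S}$ is the multi-affine part of the paper's operator $\mathcal I_{\mathcal S}$ restricted to multiaffine inputs, $f_{\mathcal S,r}=\operatorname{Ind}_{\mathcal S}(e_r)$, and $e_r$ is a realizable volume polynomial. But the content of the theorem is that $\operatorname{Ind}_{\mathcal S}$ preserves these two classes, and neither of your routes proves this.

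\emph{The direct route.} It rests on a derivative recursion that you concede is false, and this is not a minor technicality. As the paper remarks, partial derivatives of $f_{\mathcal S,r}$ are usually not polynomials built from a similar matching statistic, because of exactly the overcounting of $B$ you ran into. An inclusion--exclusion correction brings in negative terms, which destroys the positivity an induction needs. Even a correct identity of the shape $\partial_{y_t}\operatorname{Ind}_{\mathcal S}(g)=\sum_{j\in S_t}(\cdots)$ would not close an induction, since a sum of Lorentzian polynomials need not be Lorentzian. The endpoint is also wrong. For $T'\in\binom{[n]}{r-2}$ the relevant derivatives are $\sum_{\{a,b\}}C_{T'\cup\{a,b\}}(\mathcal S)\,y_ay_b$, which are not of the form $f_{\mathcal S',2}$. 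Even for $r=2$, the condition is that an $n\times n$ zero-diagonal matrix has at most one positive eigenvalue, not a $2\times 2$ check.

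\emph{The realizable route.} It never specifies a variety and nef divisors whose intersection numbers are the $C_T(\mathcal S)$. As you note, substitution along $M^{\top}$ produces weighted matching counts. ``Multiplicity one by genericity'' is the statement to be proved, not an argument for it.

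The missing idea is symbol theory. Let $\Phi$ be a linear operator on $\RR[x]_\kappa$ that preserves homogeneous polynomials. If the single polynomial $\Phi\bigl(\prod_i(x_i+u_i)^{\kappa_i}\bigr)$ is Lorentzian, resp.\ a realizable volume polynomial over $\kk$, then $\Phi$ preserves that class \cite[Theorem~3.2]{BH20}, \cite[Theorem~1.12]{GHMSW2025volpres}. No derivative of $f_{\mathcal S,r}$ is ever examined.

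The paper argues as follows. It extends the operator by $x^{[\alpha]}\mapsto\sum_{\alpha\overset{\mathcal S}{\longleftrightarrow}\beta}y^{[\beta]}$. It then uses McDiarmid's Hall--Rado theorem \cite{McD75} to identify the symbol with the exponential generating function of the induced polymatroid $\mathcal I_{\mathcal E}\bigl(\bigoplus_i\Delta^{\kappa_i}_{E_i}\bigr)$ on $[m]\sqcup[n]$, where $\mathcal E$ is the sequence of stars of $\Gamma_{\mathcal S}$. This polymatroid is linearly realizable over every field, so \cite{EL2024intersection} applies, and the paper finishes by taking the multi-affine part.

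Your own operator can be completed the same way, even more simply. On $\RR[x]_{\mathbf 1}$ its symbol is the sum of $y^T\prod_{i\notin B}u_i$ over all pairs with $B$ matchable to $T$. This is exactly the basis generating polynomial of the transversal matroid on $\{u_i\}\sqcup\{y_t\}$ presented by $A_i=\{u_i\}\cup\{y_t : i\in S_t\}$ for $i\in[m]$, since $B$ is recovered from a basis as the set of $i$ with $u_i\notin$ the basis. That matroid is realizable over every infinite field, so its basis generating polynomial is a realizable volume polynomial, via the arrangement Schubert variety as for $e_r$. This suffices over any field by \cite[Proposition~2.10]{GHMSW2025volpres}.
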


See \cite{Huh2026VolumePolynomials} for a survey of Lorentzian polynomials and volume polynomials, as well as inequalities that their coefficients satisfy.
In particular, the theorem implies that the quantities $C_T(\mathcal S)$ satisfy variations of the Alexandrov--Fenchel inequality and the reverse Khovanskii--Teissier inequality. We do not know elementary proofs of such inequalities for the statistic $C_T(\mathcal S)$.

\smallskip
We caution that a subset $B$ is counted towards $C_T(\mathcal S)$ only once even if the number of bijections satisfying the condition is greater than one; the alternative statistic is rather straightforward to analyze (Example~\ref{eg:AS}). As a result, a partial derivative of $f_{\mathcal S,r}$ is usually not another polynomial constructed with a similar matching statistic. Therefore, the common inductive strategy involving partial derivatives for showing that a polynomial is Lorentzian does not seem useful in our case. We deduce the theorem by introducing a linear operator on polynomials that preserve Lorentzian polynomials and realizable volume polynomials, as follows. Let $\Gamma_{\mathcal S}$ be a bipartite graph on vertices $[m]\sqcup [n]$ where a vertex $i\in [m]$ is connected to a vertex $j\in [n]$ if and only if $i\in S_j$.

\begin{defn}
We say a pair of $\alpha \in \ZZ_{\geq 0}^m$ and $\beta \in \ZZ_{\geq 0}^n$ admits an \emph{$\mathcal S$-matching} if there exists a weighting $w\colon E(\Gamma_{\mathcal S}) \to \ZZ_{\geq 0}$ of the edges of $\Gamma_{\mathcal S}$ such that, for all $i\in[m] $ and $j\in [n]$, the sum of the weights of the edges incident to $i$ is equal to $\alpha_i$, and to $j$ is equal to $\beta_j$. We write $\alpha\overset{\mathcal S}{\longleftrightarrow}\beta$ in this case.
\end{defn}

\noindent\textbf{Notation.} For $\alpha \in \ZZ_{\geq 0}^m$, we write $\alpha! = \alpha_1 ! \dotsb \alpha_m!$ and $x^\alpha = x_1^{\alpha_1}\dotsb x_m^{\alpha_m}$ and $x^{[\alpha]}=x^\alpha/\alpha!$.  

\begin{thm}\label{thm:main2}
The linear operator $\mathcal I_{\mathcal S}\colon \RR[x_1, \dotsc, x_m] \to \RR[y_1, \dotsc, y_n]$ defined on the monomials by 
\[\mathcal I_{\mathcal S}\left(x^{[\alpha]}\right) := \sum_{\beta: \alpha\overset{\mathcal S}\longleftrightarrow\beta} y^{[\beta]}
\]
preserves Lorentzian polynomials and realizable volume polynomials over a field.
\end{thm}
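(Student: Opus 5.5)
The plan is to reduce the statement to a property of a single ``kernel'' polynomial, via the symbol theorem for linear operators on Lorentzian polynomials \cite[Theorem 3.2]{BH20}. Fix $\kappa\in\ZZ_{\geq 0}^m$ and consider the restriction of $\mathcal I_{\mathcal S}$ to polynomials of degree at most $\kappa_i$ in each $x_i$. This restriction is homogeneous: every $\beta$ with $\alpha\overset{\mathcal S}\longleftrightarrow\beta$ has $|\beta|=|\alpha|$, since both equal the total edge weight. So it suffices to show that its symbol
\[
\operatorname{sym}(x,w,y)=\mathcal I_{\mathcal S}\bigl((x+w)^\kappa\bigr)
\]
is Lorentzian, where the operator acts on the $x$-variables and $w$ is a spectator. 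Expanding gives
\[
\operatorname{sym}=\kappa!\sum_{\alpha\le\kappa} w^{[\kappa-\alpha]}\,\mathcal I_{\mathcal S}(x^{[\alpha]})
=\kappa!\sum_{(\gamma,\beta)\in J_\kappa} w^{[\gamma]}y^{[\beta]},
\qquad
J_\kappa=\{(\gamma,\beta): \gamma\le\kappa,\ \kappa-\gamma\overset{\mathcal S}\longleftrightarrow\beta\}.
\]
The normalized generating function $\sum_{\nu\in J}z^{[\nu]}$ of an M-convex set $J$ is Lorentzian by \cite{BH20}. The symbol theorem then reduces the Lorentzian half of the theorem to showing that $J_\kappa$ is M-convex.

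To prove that $J_\kappa$ is M-convex, I would form the bipartite graph $\Gamma'$ whose left vertices are $[m]$ and whose right vertices are $[m]\sqcup[n]$, the first copy standing for the $w$-variables. Each left vertex $i$ is joined to its own $w_i$ and to every $y_j$ with $i\in S_j$. Then $(\gamma,\beta)\in J_\kappa$ holds exactly when there is a nonnegative integer edge weighting of $\Gamma'$ with left degree sequence $\kappa$ and right degree sequence $(\gamma,\beta)$. Given such a weighting, put $\gamma_i$ equal to the weight on the edge $iw_i$; the remaining weights then realize the matching $\kappa-\gamma\overset{\mathcal S}\longleftrightarrow\beta$. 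So $J_\kappa$ is the image of the singleton M-convex set $\{\kappa\}$ under transport along a bipartite graph, i.e.\ an induced polymatroid (base) set. This is M-convex by the classical theorem on induction of M-convex sets through networks (Murota). One could also verify the exchange axiom directly by augmenting paths in $\Gamma'$.

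For realizable volume polynomials, I would look for the analogous symbol-type criterion in \cite{GHMSW2025volpres}. If no such criterion is available, I would instead factor $\mathcal I_{\mathcal S}$ geometrically. Realizability of the induced polymatroid is standard: if $V_1,\dots,V_m$ are realizing subspaces, generic linear maps $V_i\to W_j$ for $i\in S_j$ realize the induced configuration, and this works over any infinite field, or after field extension. The aim is to express $\mathcal I_{\mathcal S}$ as a composition of operations already known to preserve realizable volume polynomials. These would be a pullback along a map of products of projective spaces or Grassmannians, which handles the ``duplicate $x_i$ to each $j\in S_i$'' step, followed by intersecting with generic divisors and pushing forward, which replaces the count of weightings $w$ by the indicator that some weighting exists. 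The naive substitution $x_i\mapsto\sum_{j: i\in S_j}y_j$ counts weightings with multinomial multiplicities. The genericity argument is what must collapse these multiplicities to $0/1$, since a generic intersection number of this kind equals $1$ exactly on the support.

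I expect this last step to be the main obstacle. The Lorentzian half is essentially formal once $J_\kappa$ is identified as M-convex. For the volume half, one must produce an honest geometric or linear-algebraic construction whose degree computation yields the indicator of $\alpha\overset{\mathcal S}\longleftrightarrow\beta$ rather than the number of weightings. I would first try the construction in which $\mathbb P^{\alpha_i}$-type factors are mapped by generic linear projections, so that each relevant mixed degree is $1$ by a transversality argument.
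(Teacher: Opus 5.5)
Your Lorentzian half is correct, and it is a valid alternative to the paper's route, which instead deduces Lorentzian from the volume statement. The expansion of the symbol, the description of $J_\kappa$ as the right-degree vectors of weightings of $\Gamma'$ with left degrees $\kappa$, M-convexity by network induction (equivalently \cite[Theorem~2]{McD75}), and \cite[Theorem~3.2]{BH20} all go through.

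\textbf{The gap is the realizable-volume half.} You only sketch a plan there. No variety, divisors, or intersection numbers are ever specified, so nothing is proved. The plan is also aimed at the wrong object: there is no need to factor $\mathcal I_{\mathcal S}$ geometrically. The symbol criterion you suspected does exist, namely \cite[Theorem~1.12]{GHMSW2025volpres}. So it suffices that the single polynomial $\operatorname{sym}$ be a realizable volume polynomial over $\kk$, and this you never establish. The missing key fact is the volume analogue of ``M-convex implies Lorentzian'' \cite[Corollary~1.4]{EL2024intersection}: if a polymatroid $\P$ on a finite set $F$ is linearly realizable over $\kk$, then $\sum_{\nu\in Q(\P)\cap\ZZ^F} z^{[\nu]}$ is a realizable volume polynomial over $\kk$. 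Your heuristic that ``a generic intersection number equals $1$ exactly on the support'' is what that theorem supplies, applied to the symbol rather than to the operator.

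You already have every other ingredient. Your graph $\Gamma'$ shows that $J_\kappa=Q(\P_\kappa)\cap\ZZ^{[m]\sqcup[n]}$. Here $\P_\kappa$ is the polymatroid induced, via the sequence $(\{1\},\dotsc,\{m\},S_1,\dotsc,S_n)$, from the polymatroid on $[m]$ with rank function $A\mapsto\sum_{i\in A}\kappa_i$. Concretely, $\rk_{\P_\kappa}(A\sqcup B)=\sum_{i\in A\cup\bigcup_{j\in B}S_j}\kappa_i$ for $A\subseteq[m]$ and $B\subseteq[n]$.

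\begin{itemize}
\item The polymatroid on $[m]$ is realized over any field by $L=\bigoplus_i\kk^{\kappa_i}$ itself.
\item $\P_\kappa$ is realized by the image of $L$ in $\bigoplus_{i\in[m]}\kk^{\kappa_i}\oplus\bigoplus_{j\in[n]}\bigoplus_{i\in S_j}\kk^{\kappa_i}$, under the map whose components are coordinate projections.
\end{itemize}

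So no genericity or field extension is needed. This $\P_\kappa$ has the same rank function as the polymatroid the paper builds by inducing $\bigoplus_i\Delta^{\kappa_i}_{E_i}$ from the edge set of $\Gamma_{\mathcal S}$. Your extra edges joining $i$ to $w_i$ also make the case of an $i$ lying in no $S_j$ come out right automatically.

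The paper's proof is then exactly this chain:
\begin{enumerate}
\item $\P_\kappa$ is realizable over every field.
\item Hence $\operatorname{sym}$ is a realizable volume polynomial by \cite{EL2024intersection}.
\item Hence $\operatorname{sym}$ is Lorentzian by \cite[Theorem~4.6]{BH20}.
\item Both preservation statements then follow from the two symbol theorems.
\end{enumerate}
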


The operator $\mathcal I_{\mathcal S}$ does not always preserve \emph{stable polynomials} in the sense of \cite{BB09}; see Example~\ref{eg:notstable}. We prove Theorem~\ref{thm:main2} in Section~\ref{sec:symbol} by showing that the symbol of the operator $\mathcal I_{\mathcal S}$ is a realizable volume polynomial over any field.

\begin{proof}[Proof of Theorem~\ref{thm:main1} from Theorem~\ref{thm:main2}]
The polynomial $\sum_{T \in \binom{[n]}{r}}  C_T(\mathcal S) y^T$ is the result of taking the multi-affine part (i.e.\ the terms with square-free monomials) of the polynomial $\mathcal I_{\mathcal S}(\sum_{B\in \binom{[m]}{r}} x_B)$. The elementary symmetric polynomial $\sum_{B\in \binom{[m]}{r}} x_B$ is a realizable volume polynomial over any field because it is over any infinite field $\kk$, witnessed by the arrangement Schubert variety of a generic $r$-dimensional subspace of $\kk^m$ \cite[Example 3.5]{Huh2026VolumePolynomials}, and taking field extensions {does} not change the set of realizable volume polynomials \cite[Proposition 2.10]{GHMSW2025volpres}. The desired conclusion now follows from Theorem~\ref{thm:main2} and that taking the multi-affine part preserves Lorentzian polynomials and realizable volume polynomials \cite[Corollary~3.5]{BH20}, \cite[Corollary~3.3]{GHMSW2025volpres}.
\end{proof}

One can similarly deduce Lorentzian properties about variations of the matching statistic $C_T(\mathcal S)$; see Remark~\ref{rem:rest} and Remark~\ref{rem:CM}.  We illustrate the notions introduced so far in an example for the convenience of the reader.

\begin{eg}
Consider the sequence $\mathcal{S}=(\{1,2,3,4\},\{2,3\},\{3,4\})$. Two distinct $\mathcal{S}$-matchings that witness $(0,2,2,1)\overset{\mathcal{S}}{\longleftrightarrow}(2,2,1)$ are illustrated below. In both $\mathcal{S}$-matchings, \textcolor{NavyBlue}{nonhighlighted} edges are weight zero and edges are thickened to simulate weight. The highlighted edges on the \textcolor{BurntOrange}{left} are all given weight one. On the \textcolor{Orchid}{right}, edge weights are labelled.
\begin{figure}[H]
\centering
\begin{tikzpicture}
    \draw [BurntOrange, line width=1mm, opacity = 0.65] (0,1) --(3,1);
    \draw [BurntOrange, line width=1mm, opacity = 0.65] (0,1) --(3,2);
    \draw [BurntOrange, line width=1mm, opacity = 0.65] (0,-1) -- (3,2);
    \draw [BurntOrange, line width=1mm, opacity = 0.65] (0,0) -- (3,0);
    \draw [BurntOrange, line width=1mm, opacity = 0.65] (0,0) -- (3,1);
    \node [BurntOrange, circle, fill, opacity = 0.65, inner sep = 2pt] at (0,0){};
    
    \node [BurntOrange, circle, fill, opacity = 0.65, inner sep = 2pt] at (3,0){};
    \node [BurntOrange, circle, fill, opacity = 0.65, inner sep = 2pt] at (0,1){};
    \node [BurntOrange, circle, fill, opacity = 0.65, inner sep = 2pt] at (3,1){};
    \node [BurntOrange, circle, fill, opacity = 0.65, inner sep = 2pt] at (3,2){};
    
    \node [BurntOrange, circle, fill, opacity = 0.65, inner sep = 2pt] at (0,-1){};
    \draw [NavyBlue,thick] (0,2) -- (3,2);
    \draw [NavyBlue,thick] (0,1) -- (3,1);
    \draw [NavyBlue,thick] (0,1) -- (3,2);
    \draw [NavyBlue,thick] (0,0) -- (3,2);
    \draw [NavyBlue,thick] (0,0) -- (3,1);
    \draw [NavyBlue,thick] (0,0) -- (3,0);
    \draw [NavyBlue,thick] (0,-1) -- (3,2);
    \draw [NavyBlue,thick] (0,-1) -- (3,0);
    \node [NavyBlue, circle, fill, inner sep=1.5pt] at (0,0) {};
    \node [NavyBlue, circle, fill, inner sep=1.5pt] at (0,1) {};
    \node [NavyBlue, circle, fill, inner sep=1.5pt] at (0,2) {};
    \node [NavyBlue, circle, fill, inner sep=1.5pt] at (3,0) {};
    \node [NavyBlue, circle, fill, inner sep=1.5pt] at (3,1) {};
    \node [NavyBlue, circle, fill, inner sep=1.5pt] at (3,2) {};
    \node [NavyBlue, circle, fill, inner sep=1.5pt] at (0,-1) {};
    \node [left] at (0,-1) {\textcolor{NavyBlue}{$4$}};
    \node [left] at (0,0) {\textcolor{NavyBlue}{$3$}};
    \node [left] at (0,1) {\textcolor{NavyBlue}{$2$}};
    \node [left] at (0,2) {\textcolor{NavyBlue}{$1$}};
    \node [right] at (3,0) {\textcolor{NavyBlue}{$S_3$}};
    \node [right] at (3,1) {\textcolor{NavyBlue}{$S_2$}};
    \node [right] at (3,2) {\textcolor{NavyBlue}{$S_1$}};
    \draw [Orchid, line width=2mm, opacity = 0.65] (6,1) --(9,2);
    \draw [Orchid, line width=2mm, opacity = 0.65] (6,0) -- (9,1);
    \draw [Orchid, line width=1mm, opacity = 0.65] (6,-1) -- (9,0);
    \node [Orchid, circle, fill, opacity = 0.65, inner sep = 2.5pt] at (6,0){};
    
    \node [Orchid, circle, fill, opacity = 0.65, inner sep = 2.5pt] at (6,1){};
    \node [Orchid, circle, fill, opacity = 0.65, inner sep = 2.5pt] at (9,1){};
    \node [Orchid, circle, fill, opacity = 0.65, inner sep = 2.5pt] at (9,2){};
    
    \node [Orchid, circle, fill, opacity = 0.65, inner sep = 2pt] at (6,-1){};
    
    \node [Orchid, circle, fill, opacity = 0.65, inner sep = 2pt] at (9,0){};
    \draw [NavyBlue,thick] (6,2) -- (9,2);
    \draw [NavyBlue,thick] (6,1) -- (9,1);
    \draw [NavyBlue,thick] (6,1) -- (9,2);
    \draw [NavyBlue,thick] (6,0) -- (9,2);
    \draw [NavyBlue,thick] (6,0) -- (9,1);
    \draw [NavyBlue,thick] (6,0) -- (9,0);
    \draw [NavyBlue,thick] (6,-1) -- (9,2);
    \draw [NavyBlue,thick] (6,-1) -- (9,0);
    \node [NavyBlue, circle, fill, inner sep=1.5pt] at (6,0) {};
    \node [NavyBlue, circle, fill, inner sep=1.5pt] at (6,1) {};
    \node [NavyBlue, circle, fill, inner sep=1.5pt] at (6,2) {};
    \node [NavyBlue, circle, fill, inner sep=1.5pt] at (9,0) {};
    \node [NavyBlue, circle, fill, inner sep=1.5pt] at (9,1) {};
    \node [NavyBlue, circle, fill, inner sep=1.5pt] at (9,2) {};
    \node [NavyBlue, circle, fill, inner sep=1.5pt] at (6,-1) {};
    \node [left] at (6,-1) {\textcolor{NavyBlue}{$4$}};
    \node [left] at (6,0) {\textcolor{NavyBlue}{$3$}};
    \node [left] at (6,1) {\textcolor{NavyBlue}{$2$}};
    \node [left] at (6,2) {\textcolor{NavyBlue}{$1$}};
    \node [right] at (9,0) {\textcolor{NavyBlue}{$S_3$}};
    \node [right] at (9,1) {\textcolor{NavyBlue}{$S_2$}};
    \node [right] at (9,2) {\textcolor{NavyBlue}{$S_1$}};
    \node at (7,-0.35) {\textcolor{Orchid}{$1$}};
   \node[white,circle, fill, inner sep=0.05pt] at (7,1.65) {\textcolor{Orchid}{$2$}};
    \node[white,circle, fill, inner sep=0.05pt] at (7,0.65) {\textcolor{Orchid}{$2$}};
    \end{tikzpicture}
    \caption{Two distinct $\mathcal{S}$-matchings that witness $(0,2,2,1)\overset{\mathcal{S}}{\longleftrightarrow}(2,2,1).$}
    \end{figure}
    Continuing the example, one computes that $\mathcal{I}_\mathcal{S}(x_1x_2+x_1x_3+x_2x_3+x_1x_4+x_2x_4+x_3x_4)=6y_1^{[2]}+y_2^{[2]}+y_3^{[2]}+5y_1(y_2+y_3)+3y_2y_3.$ In particular, $C_{\{1,2\}}(\mathcal{S})=C_{\{1,3\}}(\mathcal{S})=5$ and $C_{\{2,3\}}(\mathcal{S})=3.$ Note that there are six distinct bijections $\mu\colon\{1,2\}\rightarrow B$ but five sets $B$ which pair with $\{1,2\}$. Namely, there are 2 ways to match $\{2,3\}$ with $\{S_1,S_2\}.$
\end{eg}

We call the operator $\mathcal I_{\mathcal S}$ the \emph{inducing operator} of $\mathcal S$ because we view it as the polynomial analogue of the construction in matroid theory called \emph{induced (poly)matroids}, as we now explain.
A \emph{polymatroid} $\P$ on $[m]$ is a {function} $\operatorname{rk}_{\P}\colon 2^{[m]} \to \ZZ_{\geq 0}$ satisfying
\begin{enumerate}[label = (\roman*)]
\item $\operatorname{rk}_\P(\emptyset) = 0$,
\item if $I\subseteq J \subseteq [m]$ then $\operatorname{rk}_\P(I) \leq \operatorname{rk}_\P(J)$, and
\item for all subsets $I$ and $J$ of $[m]$, one has $\operatorname{rk}_\P(I) + \operatorname{rk}_\P(J) \geq \operatorname{rk}_\P(I\cup J) + \operatorname{rk}_\P(I\cap J)$.
\end{enumerate}
We call $\operatorname{rk}_\P$ the \emph{rank function} of $\P$, and define the \emph{base polytope} $Q(\P)$ of $\P$ by
\[
Q(\P) := \Big\{x \in \RR^m : \sum_{i\in I} x_i \leq \operatorname{rk}_\P(I) \text{ for all $I\subseteq [m]$}\quad \text{and}\quad \sum_{i\in [m]} x_i = \operatorname{rk}_\P([m])\Big\}.
\]
A \emph{matroid} on $[m]$ is a polymatroid whose rank function further satisfies $\operatorname{rk}_\P(I) \leq |I|$ for all $I \subseteq [m]$.
We point to \cite{Welsh} as a standard reference on matroids and polymatroids.
Following \cite[Chapter~8.2]{Welsh}, from a polymatroid $\P$ on $[m]$ and a sequence of subsets $\mathcal S = (S_1, \dotsc, S_n)$ of $[m]$, one can construct a new polymatroid $\mathcal I_{\mathcal S}(\P)$ on $[n]$, called the \emph{induced polymatroid}, by setting its rank function to be $\operatorname{rk}_{\mathcal I_{\mathcal S}(\P)}(I) := \operatorname{rk}_\P(\bigcup_{i\in I}S_i)$ for all $I\subseteq [n]$.  By setting the rank function to be $I\mapsto \min\{|I|, \operatorname{rk}_\P(\bigcup_{i\in I}S_i)\}$ instead, one obtains the \emph{induced matroid}.

\smallskip
The operator $\mathcal I_{\mathcal S}$ gives rise to induced polymatroids as follows.
If $f  = \sum_{\alpha} c_\alpha x^\alpha \in \RR[x_1, \dotsc, x_m]$ is Lorentzian, then its \emph{support}, i.e.\ $\operatorname{supp}(f) = \{\alpha : c_\alpha \neq 0\}$, is always the set $Q(\P) \cap \ZZ^m$ of the lattice points of the base polytope of some polymatroid $\P$ \cite[Theorem~2.25]{BH20}.
We say simply that the support of $f$ is the polymatroid $\P$ in this case.
The Hall--Rado theorem for polymatroids, recalled here as Proposition~\ref{prop:HR}, implies that 
\begin{equation*}\tag{$*$}\label{eq:induced}
\text{$\operatorname{supp}(f) = Q(\P) \cap \ZZ^m$ $\implies$ $\operatorname{supp}(\mathcal I_{\mathcal S}(f)) = Q(\mathcal I_{\mathcal S}(\P)) \cap \ZZ^n$}.
\end{equation*}
The operator $\mathcal I_{\mathcal S}$ is not the only operator satisfying the property \eqref{eq:induced} however; for example, the change of variables $x_i \mapsto \sum_{j: S_j \ni i} y_j$ also has the property \eqref{eq:induced}.
We point out how $\mathcal I_{\mathcal S}$ is the "center point" among these in Proposition~\ref{prop:1param}.

\section{The symbol of $\mathcal I_{\mathcal S}$}\label{sec:symbol}

We use the ``symbol theory'' originally developed in \cite{BB09} in the context of stable polynomials to prove Theorem~\ref{thm:main2}.
For $\kappa \in \ZZ_{\geq 0}^m$, let $\RR[x_1, \dotsc, x_m]_\kappa$ denote the set of polynomials whose support is contained in the box $[0,\kappa_1] \times \dotsb \times [0, \kappa_m]$.  For $\alpha\in \ZZ_{\geq 0}^m$ we write $\alpha \leq \kappa$ if $\alpha_i \leq \kappa_i$ for all $i \in [m]$.  The \emph{symbol} of a linear operator $T\colon  \RR[x_1, \dotsc, x_m]_\kappa \to \RR[y_1, \dotsc, y_n]$ is defined as
\[
sym_T(y,u) := T\big((x_1+u_1)^{\kappa_1}\dotsb (x_m+u_m)^{\kappa_m}\big) \in \RR[y_1,\ldots,y_n,u_1,\ldots,u_m]
\]
where $T$ acts trivially on $u$, or equivalently, $sym_T(y,u) = \kappa!\sum_{0\leq \alpha\leq  \kappa} T(x^{[\alpha]}){u}^{[\kappa-\alpha]}$.

\begin{prop}\label{prop:symbol}
Let $T$ be a linear operator $\RR[x_1, \dotsc, x_m]_\kappa\to \RR[y_1, \dotsc, y_n]$ that preserves homogeneous polynomials.
\begin{itemize}
\item \cite[Theorem~3.2]{BH20} If the symbol of $T$ is Lorentzian, then $T$ preserves Lorentzian polynomials.
\item \cite[Theorem~1.12]{GHMSW2025volpres} If the symbol of $T$ is a realizable volume polynomial over a field $\kk$, then $T$ preserves realizable volume polynomials over $\kk$.
\end{itemize}
\end{prop}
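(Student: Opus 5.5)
Both parts of Proposition~\ref{prop:symbol} are quoted from the literature, \cite[Theorem~3.2]{BH20} and \cite[Theorem~1.12]{GHMSW2025volpres}. The plan is therefore to recall the common mechanism behind them rather than argue from scratch: express $T(f)$ as a contraction of the symbol against a polynomial built from $f$, and check that every step of that contraction preserves the relevant class.

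\textbf{Step 1: the contraction formula.} Using $sym_T(y,u)=\kappa!\sum_{0\le\alpha\le\kappa}T(x^{[\alpha]})u^{[\kappa-\alpha]}$, I would apply the differential monomial $\partial_u^{\kappa-\beta}$ and then set $u=0$. This kills every term except $\alpha=\beta$, so
\[
\bigl(\partial_u^{\kappa-\beta}\,sym_T\bigr)\big|_{u=0}=\kappa!\,T(x^{[\beta]}).
\]
Now write $f=\sum_{\beta\le\kappa}c_\beta x^{[\beta]}$ and set $\tilde f(u):=\sum_\beta c_\beta u^{\kappa-\beta}$. By linearity,
\[
T(f)=\frac{1}{\kappa!}\bigl(\tilde f(\partial_u)\,sym_T(y,u)\bigr)\big|_{u=0}.
\]
Homogeneity of $T$ makes this compatible with degrees, so $T(f)$ is homogeneous whenever $f$ is.

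\textbf{Step 2: the Lorentzian case.} I would check three facts.
\begin{itemize}
\item If $f$ is Lorentzian with support in the box, then $\tilde f$ is Lorentzian up to the normalization conventions. This is the ``normalized reversal'' $x^{[\beta]}\mapsto u^{\kappa-\beta}$; it is exactly the duality statement in \cite{BH20}, proved by polarizing to the multiaffine setting, where reversal is complementation of supports of M-convex sets and is compatible with the Hessian condition.
\item For Lorentzian $g$ and $h$, the contraction $g(\partial_u)h$ is Lorentzian. This follows by writing it as a limit of compositions of directional derivatives $\sum_i a_i\partial_{u_i}$ with $a\ge0$, each of which preserves Lorentzian polynomials.
\item Setting $u=0$ preserves Lorentzian polynomials.
\end{itemize}
Chaining these gives the first bullet.

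\textbf{Step 3: the realizable-volume case.} The same formula applies, and I would verify each operation geometrically.
\begin{itemize}
\item Reversal of $f$ corresponds to passing to a complementary intersection pattern. Concretely, $\tilde f$ is realized by a suitable projective-bundle or graph construction over the variety realizing $f$, and taking complements in a product of projective spaces $\prod\mathbb P^{\kappa_i}$ realizes the reversed polynomial.
\item The contraction $\tilde f(\partial_u)sym_T$ becomes an intersection product on the product of the two realizing varieties, restricted to a diagonal-type subvariety.
\item Setting $u=0$ becomes a pushforward or specialization.
\end{itemize}
Realizability over $\kk$ is preserved because every construction is defined over $\kk$, with field extensions allowed by \cite[Proposition~2.10]{GHMSW2025volpres}.

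\textbf{Main obstacle.} I expect the hardest step to be the reversal/duality: showing that $\tilde f$, and not just $f$, lies in the class. In the Lorentzian case I would handle it by the polarization argument. In the volume case I would first try realizing $\tilde f$ via the complementary Schubert-type cycle in $\prod\mathbb P^{\kappa_i}$. For the paper's purposes, however, it suffices to cite \cite{BH20} and \cite{GHMSW2025volpres} directly.
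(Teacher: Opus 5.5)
Your closing remark matches the paper, which gives no argument for this proposition beyond the two citations. The mechanism you sketch, however, breaks at the first bullet of Step~2, and the hedge ``up to the normalization conventions'' conceals exactly where.

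\textbf{The reversal step is false.} Your contraction identity is correct, but it fixes the normalization: you need the specific polynomial $\tilde f=\sum_\beta c_\beta u^{\kappa-\beta}$ to be Lorentzian, and it need not be. Take $m=2$, $\kappa=(2,2)$ and $f=(x_1+x_2)^2=2x_1^{[2]}+2x_1x_2+2x_2^{[2]}$. Then $\tilde f=2(u_1^2+u_1u_2+u_2^2)$, whose Hessian is positive definite, so $\tilde f$ is not Lorentzian.

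The normalized reversal $\sum_\beta c_\beta u^{[\kappa-\beta]}=(u_1+u_2)^2$ is Lorentzian here. But feeding it into your contraction returns $\sum_\beta c_\beta T(x^{[\beta]})/(\kappa-\beta)!$, not $T(f)$. The factor $(\kappa-\beta)!$ is constant only when $\kappa=\mathbf 1$.

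The same example kills the first bullet of Step~3. Up to a constant, $(x_1+x_2)^2$ is the volume polynomial of $\mathbb P^2$ with two hyperplane classes, and volume polynomials are Lorentzian. So no ``complementary cycle'' construction can realize $\tilde f$.

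\textbf{The justification of the second bullet is also invalid.} $g(\partial_u)$ is a limit of compositions of nonnegative directional derivatives only if $g$ is a limit of products of nonnegative linear forms. Such limits have rank at most $2$ in degree $2$, so this fails for the Lorentzian quadric $x_1^2+x_2^2+x_3^2+3(x_1x_2+x_1x_3+x_2x_3)$, which has rank $3$. Writing $g$ as a positive combination of such products does not help, because sums of Lorentzian polynomials need not be Lorentzian.

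\textbf{The missing idea is to polarize the operator, not $f$.} Let $\Pi^\uparrow$ send $x^\alpha$ to $\prod_i\binom{\kappa_i}{\alpha_i}^{-1}e_{\alpha_i}(x_{i1},\dots,x_{i\kappa_i})$, where $e_k$ is the elementary symmetric polynomial, and let $\Pi^\downarrow$ substitute $x_{ij}\mapsto x_i$. Then:
\begin{itemize}
\item $T(f)=(T\circ\Pi^\downarrow)(\Pi^\uparrow f)$;
\item $\Pi^\uparrow f$ is Lorentzian, since polarization preserves Lorentzian polynomials;
\item $sym_{T\circ\Pi^\downarrow}=T\bigl(\prod_{i,j}(x_i+u_{ij})\bigr)$ is exactly the polarization of $sym_T$ in the $u$-variables, hence Lorentzian.
\end{itemize}
So you may assume $\kappa=\mathbf 1$ on some variable set $N$. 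Then neither a reversal statement nor a general $g(\partial)h$ statement is needed. For multiaffine $g(v)=\sum_S g_Sv^S$ one has
\[
T(g)=\Bigl[\prod_{i\in N}(\partial_{u_i}+\partial_{v_i})\bigl(sym_T(y,u)\,g(v)\bigr)\Bigr]_{u=v=0},
\]
because $\prod_{i}(\partial_{u_i}+\partial_{v_i})\bigl(u^{N\setminus S}v^R\bigr)$ has nonzero constant term only when $R=S$. This uses only three operations, each of which preserves Lorentzian polynomials: products in disjoint sets of variables, derivatives in nonnegative directions, and setting variables to zero.

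The volume-polynomial bullet has to be rebuilt the same way. You would check polarization and these three operations for realizable volume polynomials, instead of trying to realize $\tilde f$.
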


We now fix an arbitrary $\kappa \in \ZZ_{\geq 0}^m$ for the rest of the section, and consider $\mathcal I_{\mathcal S}$ as an operator on $\RR[x_1, \dotsc, x_m]_\kappa$.
We compute the symbol of $\mathcal I_{\mathcal S}$ to be
\[
sym_{\mathcal I_{\mathcal S}}(y,u) 
= \kappa! \sum_{0\leq \alpha\leq \kappa} \mathcal I_{\mathcal S}\left(x^{[\alpha]}\right) u^{[\kappa-\alpha]}
= \kappa!\sum_{0\leq \alpha \leq \kappa} \sum_{\beta: \alpha\overset{\mathcal S}\longleftrightarrow\beta} y^{[\beta]}u^{[\kappa-\alpha]}.
\]
Note that the symbol is the exponential generating function of its support, which is necessarily a polymatroid if the symbol is to be Lorentzian.
Our primary goal concerns the linear realizability of this polymatroid.
A polymatroid $\P$ on $[m]$ is said to be \emph{linearly realizable} over a field $\kk$ if there exist $\kk$-vector spaces $V_1, \dotsc, V_m$ and a subspace $L \subseteq V_{[m]} = \bigoplus_{i\in [m]} V_i$ such that
\[
\operatorname{rk}_\P(A) = \dim \Big(\text{image of $L$ under the projection $\pi_A\colon V_{[m]} \to V_{A}$}\Big) \quad\text{for all $A\subseteq [m]$}
\]
where $V_A := \bigoplus_{j\in A} V_j$.
It is straightforward to verify that the induced polymatroid $\mathcal I_{\mathcal S}(\P)$ is then linearly realizable over the same field $\kk$ by the image of $L$ under the map $V_{[m]} \to V_{S_1} \oplus \dotsb \oplus V_{S_n}$.

\begin{thm}\label{thm:symbol}
The symbol $sym_{\mathcal I_{\mathcal S}}$ of $\mathcal I_{\mathcal S}$ is the exponential generating function of the lattice points of the base polytope of a polymatroid on $[m]\sqcup [n]$ that is linearly realizable over any field.
\end{thm}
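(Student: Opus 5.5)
The plan is to exhibit the polymatroid explicitly as an induced polymatroid, so that linear realizability comes for free from the remark preceding the theorem, and then to identify its lattice points with the support of the symbol using Hall--Rado (Proposition~\ref{prop:HR}).

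\emph{The polymatroid.} Let $\P_\kappa$ be the polymatroid on $[m]$ with $\rk_{\P_\kappa}(A) = \kappa(A) := \sum_{i\in A}\kappa_i$. It is linearly realizable over any field $\kk$ by taking $V_i = \kk^{\kappa_i}$ and $L = V_{[m]}$. Consider the sequence $\mathcal S' = (\{1\},\dotsc,\{m\},S_1,\dotsc,S_n)$ of subsets of $[m]$, indexed by $[m]\sqcup[n]$. Let $\mathcal Q := \mathcal I_{\mathcal S'}(\P_\kappa)$. For $A\subseteq [m]$ and $J\subseteq [n]$, write $N(J) = \bigcup_{j\in J} S_j$. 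Then
\[
\rk_{\mathcal Q}(A\sqcup J) = \kappa\big(A\cup N(J)\big).
\]
By the observation just before the theorem, $\mathcal Q$ is realized over $\kk$ by the image of $V_{[m]}$ in $V_{[m]}\oplus V_{S_1}\oplus\dotsb\oplus V_{S_n}$. This holds over every field, including finite ones.

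\emph{Identifying the support.} Write a monomial of the symbol as $u^{[\gamma]}y^{[\beta]}$ with $\gamma = \kappa-\alpha$. Each such pair $(\gamma,\beta)$ occurs at most once in the double sum, and every occurring one has coefficient $\kappa!$. So the symbol is ($\kappa!$ times) the exponential generating function of
\[
\{(\gamma,\beta) : 0\le\gamma\le\kappa,\ \kappa-\gamma\overset{\mathcal S}\longleftrightarrow\beta\}.
\]
It remains to show that this set equals $Q(\mathcal Q)\cap\ZZ^{[m]\sqcup[n]}$.

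\emph{First inclusion.} Suppose a weighting witnesses $\alpha\overset{\mathcal S}\longleftrightarrow\beta$ with $\alpha = \kappa-\gamma\ge 0$. Every edge at $j\in J$ lands in $N(J)$, so $\beta(J)\le\alpha(N(J))$. Hence
\[
\gamma(A)+\beta(J)\le\gamma(A)+\alpha(N(J))\le\kappa(A\cup N(J)),
\]
where the last step uses $\gamma,\alpha\ge0$ and $\gamma+\alpha=\kappa$. For the total sum, $|\gamma|+|\beta| = |\gamma|+|\alpha| = \kappa([m])$.

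\emph{Reverse inclusion.} Take a lattice point $(\gamma,\beta)$ of $Q(\mathcal Q)$. Coordinates of points of a polymatroid base polytope are nonnegative, so $\gamma,\beta\ge0$. Taking $A=\{i\}$ and $J=\emptyset$ gives $\gamma_i\le\kappa_i$, so $\alpha:=\kappa-\gamma\ge0$. The total-sum equation gives $|\alpha|=|\beta|$. By Hall--Rado (Proposition~\ref{prop:HR}, in its transportation/Gale form), $\alpha\overset{\mathcal S}\longleftrightarrow\beta$ holds iff $|\alpha|=|\beta|$ and $\alpha(A)\le\beta(J_A)$ for all $A\subseteq[m]$, where $J_A=\{j: S_j\cap A\ne\emptyset\}$.

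The trick, which I expect to be the only real obstacle, is choosing the right test set. I would use $X = ([m]\setminus A)\sqcup([n]\setminus J_A)$. Since $N([n]\setminus J_A)\subseteq [m]\setminus A$, we get $\rk_{\mathcal Q}(X)=\kappa([m]\setminus A)$. The defining inequality for $X$ then reads
\[
\gamma([m]\setminus A)+|\beta|-\beta(J_A)\le\kappa([m]\setminus A).
\]
Rearranging, $|\beta|-\beta(J_A)\le \alpha([m]\setminus A) = |\alpha|-\alpha(A)$. Since $|\alpha|=|\beta|$, this is exactly $\alpha(A)\le\beta(J_A)$, which completes the identification. Combined with Proposition~\ref{prop:symbol}, this will also yield Theorem~\ref{thm:main2}.
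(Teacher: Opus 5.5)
Your proof is correct, and it differs from the paper's mainly in how the lattice points are identified. The polymatroid itself is the same. The paper puts $\P=\Delta^{\kappa_1}_{E_1}\oplus\dotsb\oplus\Delta^{\kappa_m}_{E_m}$ on the edge set $E$ of $\Gamma_{\mathcal S}$ and induces along $\mathcal E=(E_v)_{v\in[m]\sqcup[n]}$. Unwinding this gives rank $\kappa(A\cup N(J))$ on $A\sqcup J$, provided every $i\in[m]$ lies in some $S_j$. The paper then applies Proposition~\ref{prop:HR} to $\P$ and reads an $\mathcal E$-matching as a pair of edge weightings $(w,w')$ with $w+w'\in Q(\P)$. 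The part $w'$ is the $\mathcal S$-matching and $w$ absorbs the slack $\kappa-\alpha$, so the identification is a reinterpretation of matchings rather than a check of inequalities. You instead verify the defining inequalities of $Q(\mathcal Q)$ directly, using only the classical bipartite supply--demand (Gale) theorem. Your test set $([m]\setminus A)\sqcup([n]\setminus J_A)$ is exactly what makes the reverse inclusion go through. Your route gives an explicit rank formula and a one-line realization. It also handles an $i\in[m]$ lying in no $S_j$ with $\kappa_i>0$. In the paper's construction such an $i$ has $E_i=\emptyset$ and hence rank $\rk_\P(\emptyset)=0$, while the $u_i$-exponent in the symbol is $\kappa_i$. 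On the other hand, the paper's edge-based ground set is what makes Remark~\ref{rem:rest} immediate. Edge capacities $r_e$ are imposed just by changing the summand at each $i$, whereas a polymatroid on $[m]$ has no coordinates in which to record them.
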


This theorem implies our main result Theorem~\ref{thm:main2} as follows.

\begin{proof}[Proof of Theorem~\ref{thm:main2}]
\cite[Corollary~1.4 \& Remark~3.21]{EL2024intersection} states that the exponential generating function of a linearly realizable polymatroid over $\kk$ is a realizable volume polynomial over $\kk$.
Theorem~\ref{thm:symbol} thus implies that $sym_{\mathcal I_{\mathcal S}}$ is a realizable volume polynomial over any field.  Realizable volume polynomials are also Lorentzian \cite[Theorem~4.6]{BH20}.  Proposition~\ref{prop:symbol} now implies the theorem.
\end{proof}

We prepare for the proof of Theorem~\ref{thm:symbol} with the following variant of the Hall--Rado theorem for polymatroids.

\begin{prop}\label{prop:HR}\cite[Theorem 2]{McD75}
Let $\P$ be a polymatroid on a finite set $E$, and let $\mathcal E = (E_1, \dotsc, E_\ell)$ be a sequence of subsets of $E$.
Then, for $\delta \in \ZZ_{\geq 0}^\ell$, we have that $\delta \in Q(\mathcal I_{\mathcal E}(\P))$ if and only if there exists $\gamma\in {Q(\P) \cap} \ZZ_{\geq 0}^E$ such that the pair $(\gamma,\delta) \in \ZZ_{\geq 0}^E \times \ZZ_{\geq 0}^\ell$ admits {an} $\mathcal E$-matching.
\end{prop}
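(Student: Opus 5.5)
The plan is to prove both directions directly, using Edmonds' polymatroid intersection theorem for the nontrivial one. Throughout, write $E_I := \bigcup_{j\in I} E_j$ for $I\subseteq[\ell]$ and $U := E_{[\ell]}$, so that $\operatorname{rk}_{\mathcal I_{\mathcal E}(\P)}(I) = \operatorname{rk}_\P(E_I)$.

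\textbf{A normalization I have to make.} An $\mathcal E$-matching forces $\gamma_i = 0$ for $i\notin U$, since such $i$ has no incident edges. It also forces $\sum_i\gamma_i = \sum_j \delta_j$. Membership $\gamma\in Q(\P)$ requires $\sum_i \gamma_i = \operatorname{rk}_\P(E)$, while $\delta\in Q(\mathcal I_{\mathcal E}(\P))$ requires $\sum_j\delta_j = \operatorname{rk}_\P(U)$. So the statement as worded only holds when $\operatorname{rk}_\P(U) = \operatorname{rk}_\P(E)$. If $\operatorname{rk}_\P(U) < \operatorname{rk}_\P(E)$, the left side can hold while the right side cannot. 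I will therefore assume $\operatorname{rk}_\P(U)=\operatorname{rk}_\P(E)$, for instance $U = E$, which is the case needed for Theorem~\ref{thm:symbol}. Equivalently, one replaces $\P$ by its restriction to $U$.

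\textbf{($\Leftarrow$).} Let $w$ be an $\mathcal E$-matching for $(\gamma,\delta)$ and fix $I\subseteq[\ell]$. Every edge incident to some $j\in I$ has its left endpoint in $E_I$, and all weights are nonnegative. Hence
\[
\sum_{j\in I}\delta_j \;\leq\; \sum_{i\in E_I}\gamma_i \;\leq\; \operatorname{rk}_\P(E_I),
\]
where the last inequality is because $\gamma\in Q(\P)$. For the equality constraint,
\[
\sum_j\delta_j = \sum_i\gamma_i = \operatorname{rk}_\P(E) = \operatorname{rk}_\P(U),
\]
so $\delta\in Q(\mathcal I_{\mathcal E}(\P))$.

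\textbf{($\Rightarrow$).} Let $\Gamma$ be the bipartite graph on $E\sqcup[\ell]$ with an edge $\{i,j\}$ whenever $i\in E_j$, and let $D$ be its edge set. For $F\subseteq D$, write $L(F)$ and $R(F)$ for the sets of left and right endpoints of edges in $F$. Define two functions on subsets of $D$:
\[
f_1(F) := \operatorname{rk}_\P(L(F)), \qquad f_2(F) := \sum_{j\in R(F)}\delta_j .
\]
Both are polymatroid rank functions. Indeed, $f_1$ is $\operatorname{rk}_\P$ composed with a union-preserving map, which keeps normalization, monotonicity and submodularity; $f_2$ is a weighted coverage function.

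Edmonds' polymatroid intersection theorem, in its integral form, states that
\[
\max\Big\{\sum_{e\in D} w_e \;:\; w\in\ZZ_{\geq0}^D,\ w(F)\le f_1(F)\text{ and } w(F)\le f_2(F)\ \text{for all } F\subseteq D\Big\}
\;=\; \min_{F\subseteq D}\big(f_1(F)+f_2(D\setminus F)\big).
\]

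\textbf{Key step: bounding the minimum.} I expect this to be the main obstacle, since the choice of $J$ must make both terms line up. Fix $F\subseteq D$ and let $J\subseteq[\ell]$ be the set of right vertices all of whose incident edges lie in $F$.
\begin{itemize}
\item Every $j\notin J$ has an incident edge in $D\setminus F$, so $f_2(D\setminus F)\ge\sum_{j\notin J}\delta_j$.
\item Every left vertex of an edge incident to $J$ lies in $L(F)$, so $L(F)\supseteq E_J$. Monotonicity and $\delta\in Q(\mathcal I_{\mathcal E}(\P))$ then give $f_1(F)\ge\operatorname{rk}_\P(E_J)\ge\sum_{j\in J}\delta_j$.
\end{itemize}
Adding these, every term of the minimum is at least $\sum_j\delta_j$, so there is an integral $w$ with total weight at least $\sum_j\delta_j$.

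\textbf{Reading off $\gamma$.} Applying the constraint $w\le f_2$ to the star of each $j$ gives that the weight at $j$ is at most $\delta_j$. Since the total weight is at least $\sum_j\delta_j$, each of these is an equality, so the weight at $j$ is exactly $\delta_j$.

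Let $\gamma_i$ be the weight at left vertex $i$. Applying $w\le f_1$ to all edges incident to a set $A\subseteq E$ gives $\sum_{i\in A}\gamma_i\le\operatorname{rk}_\P(A)$. Moreover,
\[
\sum_i\gamma_i \;=\; \sum_j\delta_j \;=\; \operatorname{rk}_\P(U) \;=\; \operatorname{rk}_\P(E),
\]
so $\gamma\in Q(\P)\cap\ZZ^E_{\geq0}$. By construction, $w$ is an $\mathcal E$-matching for $(\gamma,\delta)$, which completes the proof.
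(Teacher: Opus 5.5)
Your proof is correct, and it takes a genuinely different route from the paper, which gives no argument and simply cites McDiarmid's Rado-type theorem for polymatroids \cite[Theorem~2]{McD75}.

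\textbf{What your route does.} You derive the statement directly from Edmonds' integral polymatroid intersection theorem. The two functions are $F\mapsto \operatorname{rk}_{\P}(L(F))$ and $F\mapsto \sum_{j\in R(F)}\delta_j$ on the edge set of the bipartite graph. Your choice of $J$, the right vertices whose entire star lies in $F$, is exactly what makes the min-side bound close up. Reading off $\gamma$ from the stars of left vertices is also sound.

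\textbf{Comparison.} The citation is shorter. Your derivation buys two things. First, integrality of both $\gamma$ and the edge weighting comes for free from integrality of polymatroid intersection. Second, it exposes a hypothesis that the literal statement silently needs.

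\textbf{Your caveat is right.} With $Q(\P)$ the base polytope, the forward direction fails whenever $\operatorname{rk}_{\P}(\bigcup_j E_j)<\operatorname{rk}_{\P}(E)$. Any matched $\gamma$ is supported on $\bigcup_j E_j$, so its total is at most $\operatorname{rk}_{\P}(\bigcup_j E_j)$. A concrete instance: $E=\{1,2\}$, $\P$ the free matroid, $\mathcal E=(\{1\})$, $\delta=1$.

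The correct general form replaces $\P$ by its restriction to $\bigcup_j E_j$. Alternatively, state both sides with independence polytopes $\{x\geq 0 : \sum_{i\in A}x_i\leq \operatorname{rk}(A)\ \text{for all } A\}$; your argument proves that version verbatim.

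In the paper's only use of the proposition, the proof of Theorem~\ref{thm:symbol}, every edge of $\Gamma_{\mathcal S}$ lies in some $E_v$. So $\bigcup_v E_v=E$, and your normalized statement is exactly what is needed there.
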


\begin{proof}[Proof of Theorem~\ref{thm:symbol}]
We first identify the polymatroid.
For an integer $r$ and a finite set $[N]$, let $\Delta_{[N]}^r$ be the polymatroid on $[N]$ whose rank function is $\rk_\P(A) = r$ for all nonempty $A\subseteq [N]$, or equivalently, its base polytope is $Q(\Delta^r_{[N]}) = \{ x\in \RR_{\geq 0}^{[N]} : \sum_{i\in {[N]}} x_i = r\}$.  It is linearly realizable over any field $\kk$ by the diagonal embedding of $\kk^r$ in $(\kk^r)^{\oplus [N]}$.
Let $E$ be the set of edges of the bipartite graph $\Gamma_{\mathcal S}$ on $[m]\sqcup [n]$, and for each vertex $v\in [m]\sqcup [n]$, let $E_v$ be the set of edges incident to $v$.
Let $\P$ be the polymatroid on $E$ defined as the direct sum $\Delta^{\kappa_1}_{E_{1\in [m]}} \oplus \dotsb \oplus \Delta^{\kappa_m}_{E_{m\in [m]}}$.
Here we wrote $E_{i\in[m]}$ for $E_i$ to avoid possible confusion with $E_{i\in [n]}$.
Lastly, let $\mathcal E$ be the sequence $(E_v)_{v\in [m]\sqcup [n]}$, and consider the induced polymatroid $\mathcal I_{\mathcal E}(\P)$ on $[m]\sqcup [n]$, which is linearly realizable over any field $\kk$ because $\P$ is.

We claim that a pair $(\alpha,\beta)\in \ZZ_{\geq 0}^m \times \ZZ_{\geq 0}^n$ with $0\leq \alpha\leq \kappa$ admits {an} $\mathcal S$-matching if and only if $(\kappa-\alpha, \beta) \in Q(\mathcal I_{\mathcal E}(\P))$.
Since $sym_{\mathcal I_{\mathcal S}}$ is the exponential generating function of the set of such $(\kappa-\alpha,\beta)$, the theorem follows. Proposition~\ref{prop:HR} implies that $(\kappa-\alpha, \beta) \in Q(\mathcal I_{\mathcal E}(\P))$ if and only if there exists $\gamma \in Q(\P) \subset \ZZ_{\geq 0}^E$ such that $\gamma$ admits {an} $\mathcal E$-matching to $(\kappa-\alpha, \beta)$. The edge weighting of $\Gamma_{\mathcal E}$ witnessing such {an} $\mathcal E$-matching can be viewed as a function $\mathbf w\colon E \to \ZZ_{\geq 0}^2$, $e \mapsto (w_e, w'_e)$ on the edges of $\Gamma_{\mathcal S}$ such that:
\begin{itemize}
\item $(w_e + w'_e)_{e\in E} \in Q(\P)$, 
\item for every $i\in [m]$, the sum of $w_e$ over all $e\in E$ incident to $i$ is $\kappa_i-\alpha_i$, and
\item for every $j\in [n]$, the sum of $w'_e$ over all $e\in E$ incident to $j$ is $\beta_j$.
\end{itemize}
Since $\P$ is the direct sum of $\Delta^{\kappa_i}_{E_{i\in [m]}}$, we have $Q(\P) = \prod_{i\in [m]} Q(\Delta^{\kappa_i}_{E_{i\in [m]}})$, and thus $(w_e + w'_e)_{e\in E} \in Q(\P)$ if and only if $\sum_{e\in E_{i\in [m]}} (w_e + w'_e) = \kappa_i$ for every $i\in [m]$.  Hence, the last two properties of $\mathbf w$ imply that the edge weighting $w'\colon E \to \ZZ_{\geq 0}$, $e\mapsto w'_e$ witnesses that $(\alpha,\beta)$ admits {an} $\mathcal S$-matching.
Conversely, if $(\alpha,\beta)$ admits {an} $\mathcal S$-matching witnessed by an edge weighting $w'\colon E \to \ZZ_{\geq 0}$, $e\mapsto w'_e$, then we may extend $w'$ to a function $\mathbf w\colon E \to \ZZ_{\geq 0}^2$ satisfying the three bulleted properties above by setting $(w_e)_{e\in E}$ to be any nonnegative integer vector satisfying the second bulleted property.
\end{proof}

\begin{rem}\label{rem:rest}
Let $E$ be the edges of the bipartite graph $\Gamma_{\mathcal S}$.
Given a function $r\colon E \to \ZZ_{\geq 0}$, we say that a pair $(\alpha, \beta)\in \ZZ_{\geq 0}^m \times \ZZ_{\geq 0}^n$ admits {an} \emph{$r$-restricted $\mathcal S$-matching} if it admits {an} $\mathcal S$-matching witnessed by an edge weighting $w\colon  E\to \ZZ_{\geq 0}$ satisfying $w_e \leq r_e$ for all $e\in E$.  For any $r$ and $\kappa$, the proof of Theorem~\ref{thm:symbol} modifies straightforwardly to show that the symbol of the analogue of the operator $\mathcal I_{\mathcal S}$ for $r$-restricted $\mathcal S$-matchings is also a realizable volume polynomial over any field.  The only required change is to replace the polymatroid $\Delta^{\kappa_i}_{E_{i\in [m]}}$ by the polymatroid that is linearly realizable by the image of the map $\kk^{\kappa_i}\to \bigoplus_{e\in E_{i\in [m]}} \kk^{r_e}$ where the map $\kk^{\kappa_i} \to \kk^{r_e}$ for every $e\in E$ can be taken to be any map of full rank.
\end{rem}

A polynomial $f(x_1, \dotsc, x_n)$ is \emph{stable} if it has no roots in the upper half-plane of $\CC^n$. Not all basis generating polynomials of (poly)matroids have such a half-plane property, 
and classifying such (poly)matroids is an open problem initiated in \cite{COSW2004halfplane}.
For example, basis generating functions of certain transversal matroids are not stable~\cite{CW2006Rayleigh}. In agreement, the operator $\mathcal I_{\mathcal S}$ often does not preserve stability.

\begin{eg}\label{eg:notstable}
Let $\mathcal{S}=(123,123,12)$ be a sequence of subsets of $[3]$.
By noting that every $\beta \in \ZZ_{\geq 0}^3$ with $\beta_1 + \beta_2 + \beta_3 = 3$  except ${(0,0,3)}$ admits {an} $\mathcal S$-matching to $(1,1,1)$, one computes that
\[
\mathcal I_{\mathcal S}(x_1x_2x_3) = \frac{1}{6}((y_1+y_2+y_3)^3-y_3^3).\]
We have a factorization
$$(y_1+y_2+y_3)^3-y_3^3=(y_1+y_2)((y_1+y_2+y_3)^2+(y_1+y_2+y_3)y_3+y_3^2)=(y_1+y_2)y_3^2(v^2+v+1)$$ where $v = (y_1+y_2+y_3)/y_3.$
Note that the roots of $v^2+v+1$ lie in the upper half plane of $\CC$.  Using this, we can construct a solution to $I_{\mathcal S}(x_1x_2x_3) = 0$ such as $y_3 = e^\frac{\pi i}{12}$ and $y_1=y_2=\frac{1}{2}(e^{\frac{3\pi i}{4}} - e^{\frac{\pi i}{12}})$, which witnesses that $\mathcal I_{\mathcal S}(x_1x_2x_3)$ is not stable.
\end{eg}

\begin{rem}\label{rem:CM}
We note the following generalization of Theorem~\ref{thm:main1}, which can be deduced from Theorem~\ref{thm:main2} similarly.  For a matroid $\M$ on $[m]$ of rank $r$, and a subset $T\subseteq [n]$, let
\begin{align*}
C_T(\M,\mathcal S) := |\{B \text{ a basis of }\M : \text{there is a bijection $\mu\colon  T \to B$ with $\mu(t) \in S_t$ for all $t\in T$}\}|.
\end{align*}
Then the polynomial $\sum_{T\in \binom{[n]}{r}} C_T(\M,\mathcal S) y^T$ is Lorentzian, and further a realizable polynomial over a field $\kk$ if the matroid $\M$ is realizable over a field of the same characteristic.
\end{rem}

\section{Inducing operators}

Proposition~\ref{prop:HR} implies that the linear operator has the following support property \eqref{eq:induced}:
\begin{quote}
If the support of a Lorentzian polynomial $f\in \RR[x_1, \dotsc, x_m]$ is a polymatroid $\P$, then the support of the polynomial $\mathcal I_{\mathcal S}(f)$ is the induced polymatroid $\mathcal I_{\mathcal S}(\P)$.
\end{quote}
There are other linear operators with this property \eqref{eq:induced}, such as the following.

\begin{eg}\label{eg:AS}
Let $A_{\mathcal S}$ be the matrix whose columns are $\be_{S_1},\dotsc, \be_{S_n}$, where $\be_S := \sum_{i\in S} \be_i \in \RR^m$.  The linear operator $A_{\mathcal S}\colon  f(x) \mapsto f(A_{\mathcal S}y)$, or equivalently the change of variables operator $x_i \mapsto \sum_{S_j \ni i} y_j$, satisfies the support property \eqref{eq:induced}.  Indeed, it is straightforward to verify that the support of $A_{\mathcal S}(x^\alpha)$ and that of $\mathcal I_{\mathcal S}(x^\alpha)$ coincide for $\alpha \in \ZZ_{\geq 0}^m$.
Combinatorially, the distinction between $\mathcal I_{\mathcal S}$ and $A_{\mathcal S}$ is that for $A_{\mathcal S}$, the coefficient of $y^{[\beta]}$ in $A_{\mathcal S}(x^{[\alpha]})$ is given by the number of edge weightings of $\Gamma_{\mathcal S}$ witnessing that $\alpha \overset{\mathcal S}\longleftrightarrow \beta$, whereas $\mathcal I_{\mathcal S}$ only detects the existence of such an edge weighting.
\end{eg}

More generally, let $A$ be a nonnegative $m\times n$ matrix whose zero-nonzero pattern is the same as that of the matrix whose columns are $\be_{S_1},\dotsc, \be_{S_n}$, where $\be_S := \sum_{i\in S} \be_i \in \RR^m$.  Then the linear operator $A\colon  f(x) \mapsto f(Ay)$ satisfies the support property \eqref{eq:induced}.
The symbol of $A$ is Lorentzian, being a product of nonnegative linear forms.

\begin{prop}\label{prop:1param}
Fix any $\kappa\in \ZZ_{\geq 0}^m$.  Let $\mathbf I_{\mathcal S}$ be the space of linear operators $T\colon \RR[x]_\kappa \to \RR[y]$ with Lorentzian symbol and with property \eqref{eq:induced}.  Then $\mathbf I_{\mathcal S}$ deformation retracts to the point $\mathcal I_{\mathcal S}$.
\end{prop}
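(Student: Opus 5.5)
The plan is to contract $\mathbf I_{\mathcal S}$ onto $\mathcal I_{\mathcal S}$ by a homotopy that acts on the coefficients of the symbol. The support of the symbol stays fixed along the homotopy, and the Lorentzian property is preserved by a power-of-coefficients deformation.

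First I would show that every $T \in \mathbf I_{\mathcal S}$ has symbol with exactly the support of $sym_{\mathcal I_{\mathcal S}}$, and with positive coefficients there.
\begin{itemize}
\item For $0\le \alpha\le\kappa$, the monomial $x^{[\alpha]}$ is Lorentzian with support the one-point polymatroid $\{\alpha\}$.
\item Property \eqref{eq:induced} then says $\operatorname{supp} T(x^{[\alpha]})$ is the lattice-point set of the induced polymatroid of $\{\alpha\}$. By Proposition~\ref{prop:HR}, this set is $\{\beta : \alpha\overset{\mathcal S}\longleftrightarrow\beta\}$.
\item A Lorentzian symbol has nonnegative coefficients. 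Also, $sym_T=\kappa!\sum_\alpha T(x^{[\alpha]})u^{[\kappa-\alpha]}$ involves no cancellation between different $\alpha$.
\end{itemize}
Hence
\[sym_T=\sum_{\alpha\le\kappa,\ \alpha\overset{\mathcal S}\longleftrightarrow\beta} a_{\alpha\beta}\, y^{[\beta]}u^{[\kappa-\alpha]}\qquad\text{with all } a_{\alpha\beta}>0.\]
In particular, $T$ is determined by the positive vector $(a_{\alpha\beta})$ indexed by the fixed set $J=\operatorname{supp}(sym_{\mathcal I_{\mathcal S}})$. Moreover, $\mathcal I_{\mathcal S}$ corresponds to the constant vector $a_{\alpha\beta}=\kappa!$.

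Next, for $t\in[0,1]$ I would define $T_t$ to be the operator whose symbol has coefficients $a_{\alpha\beta}^{1-t}(\kappa!)^{t}$. Equivalently, up to the fixed positive scalar $(\kappa!)^t$, the coefficients of $sym_T$ in the normalized basis $y^{[\beta]}u^{[\kappa-\alpha]}$ are raised to the power $\theta=1-t$.
\begin{itemize}
\item The key input is the result of \cite{BH20}: if $\sum c_\gamma z^{[\gamma]}$ is Lorentzian, then so is $\sum c_\gamma^\theta z^{[\gamma]}$ for every $0\le\theta\le1$. At $\theta=0$ this gives the exponential generating function of the M-convex support.
\item Thus $sym_{T_t}$ is Lorentzian for all $t$, with $T_0=T$ and $T_1=\mathcal I_{\mathcal S}$.
\item $\mathcal I_{\mathcal S}$ is fixed for all $t$, since $(\kappa!)^{1-t}(\kappa!)^t=\kappa!$.
\item The map $(T,t)\mapsto T_t$ is continuous, since $a\mapsto a^{1-t}$ is continuous on $(0,\infty)\times[0,1]$.
\end{itemize}

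It remains to check that each $T_t$ still satisfies \eqref{eq:induced}; I expect this to be the only point needing care. Let $f=\sum c_\alpha x^\alpha$ be Lorentzian with support $\P$, so $c_\alpha\ge 0$. Every $T_t$ and $\mathcal I_{\mathcal S}$ has symbol with nonnegative coefficients, so $T_t(x^\alpha)$ has nonnegative coefficients and there is no cancellation in $T_t(f)$. Therefore
\[\operatorname{supp}T_t(f)=\bigcup_{\alpha\in\operatorname{supp} f}\operatorname{supp}T_t(x^\alpha)=\bigcup_{\alpha\in\operatorname{supp} f}\{\beta:\alpha\overset{\mathcal S}\longleftrightarrow\beta\}.\]
This set is independent of $t$ and equals $\operatorname{supp}\mathcal I_{\mathcal S}(f)=Q(\mathcal I_{\mathcal S}(\P))\cap\ZZ^n$. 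So $T_t\in\mathbf I_{\mathcal S}$ for all $t$, and $(T,t)\mapsto T_t$ is a deformation retraction of $\mathbf I_{\mathcal S}$ onto $\{\mathcal I_{\mathcal S}\}$.

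The main obstacle is the first step, namely that \eqref{eq:induced} pins down the support of $sym_T$ exactly. This requires that monomials count as Lorentzian polynomials with polymatroid support, and that Proposition~\ref{prop:HR} identifies the induced polymatroid of a single point with $\{\beta:\alpha\overset{\mathcal S}\longleftrightarrow\beta\}$. After that, the coefficient-power theorem of \cite{BH20} does the rest.
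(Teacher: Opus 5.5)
Your proposal is correct and follows essentially the same route as the paper: every $T\in\mathbf I_{\mathcal S}$ has symbol supported exactly on the monomials $y^{[\beta]}u^{[\kappa-\alpha]}$ with $0\le\alpha\le\kappa$ and $\alpha\overset{\mathcal S}\longleftrightarrow\beta$, with positive coefficients, and raising these normalized coefficients to a power in $[0,1]$ via \cite[Proposition~3.25]{BH20} contracts $\mathbf I_{\mathcal S}$ to $\mathcal I_{\mathcal S}$. Your extra bookkeeping fills in details the paper leaves implicit: the factor $(\kappa!)^t$ makes the endpoint exactly $\mathcal I_{\mathcal S}$ rather than $\mathcal I_{\mathcal S}/\kappa!$ and keeps $\mathcal I_{\mathcal S}$ fixed throughout, and you check explicitly that each $T_t$ still has property \eqref{eq:induced}.
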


\begin{proof}
The symbol of such an operator $T\in \mathbf I_{\mathcal S}$ is of the form
\[
\sum_{\substack{\alpha,\beta: 0\leq \alpha\leq \kappa\\ \text{and } \alpha \overset{\mathcal S}\longleftrightarrow \beta}} c_{\alpha,\beta} y^{[\beta]}u^{[\kappa-\alpha]}
\]
for some $c_{\alpha,\beta} > 0$.  For any $0\leq q\leq 1$, define $T^q$ to be the operator whose symbol is
\[
\sum_{\substack{\alpha,\beta: 0\leq \alpha\leq \kappa\\ \text{and } \alpha \overset{\mathcal S}\longleftrightarrow \beta}} c_{\alpha,\beta}^q y^{[\beta]}u^{[\kappa-\alpha]}.
\]
We have $T^0 = \mathcal I_{\mathcal S}$.  For all $0\leq q\leq 1$, taking $q$-th power of normalized coefficients of a Lorentzian polynomial yields a Lorentzian polynomial \cite[Proposition~3.25]{BH20}, so $T^q \in \mathbf I_{\mathcal S}$.
\end{proof}

Since the symbol of $A_{\mathcal S}$ is Lorentzian, the proof of the proposition provides a quick way to verify that the symbol of $\mathcal I_{\mathcal S}$ is Lorentzian.  However, while the symbol of $A_{\mathcal S}$ is a realizable volume polynomial (over any field), being a product of nonnegative linear forms, one cannot conclude similarly that $sym_{\mathcal I_{\mathcal S}}$ is a realizable volume polynomial.  In fact, we ask:

\begin{?}
For a polynomial $f = \sum_\alpha c_\alpha x^{[\alpha]}$, denote $f^q := \sum_\alpha c_\alpha^q x^{[\alpha]}$.  If $f^1$ is a (realizable) volume polynomial (over a field $\kk$), is $f^q$ a volume polynomial (over $\kk$) for all $0\leq q \leq 1$?
\end{?} 
 
One may seek a different 1-parameter family of operators in $\mathbf I_{\mathcal S}$, consisting of operators whose symbols are volume polynomials, that connects $\mathcal I_{\mathcal S}$ and $A_{\mathcal S}$.  The following example shows that this cannot be achieved by varying the coefficients of the matrix $A$.

\begin{eg}
Let $\mathcal S = (1,2,12)$ be a sequence of subsets of $[2]$.  We have $\mathcal I_{\mathcal S}(x_1x_2) = y_1y_2 + y_1y_3 + y_2y_3 + \frac{y_3^2}{2}$, whereas $A(x_1x_2)$ is of the form $ (ay_1 + by_3)(cy_2+dy_3) = acy_1y_2 + ady_1y_3 + bcy_2y_3 + bdy_3^2$.  For all values of $(a,b,c,d)$, the latter has the property that the product of the coefficients of $y_1y_2$ and $y_3^2$ is equal to the product of the coefficients of $y_1y_3$ and $y_2y_3$.
\end{eg}

The desired family of operators can be achieved using Theorem~\ref{thm:symbol} as follows.
For $\mathcal{S}=(S_1,\ldots,S_n)$ on $[m]$, say $S_i=\left\{j_1^{(i)},\ldots,j_{N_i}^{(i)}\right\}$, define another sequence $\widetilde{\mathcal S}$ of subsets of $[m]$ by
$$\widetilde{\mathcal{S}}=\left(S_1,\ldots,S_n, \left\{j_1^{(1)}\right\},\ldots,\left\{j_{N_1}^{(1)}\right\},\ldots,\left\{j_1^{(n)}\right\},\ldots,\left\{j_{N_n}^{(n)}\right\}\right).$$
Let $N = \sum_{i = 1}^nN_i$.  By Theorem~\ref{thm:symbol}, the symbol $sym_{\mathcal I_{\widetilde{\mathcal S}}}(y_1, \dotsc, y_n, \widetilde y_1, \dotsc, \widetilde y_N, u_1, \dotsc, u_m)$ is a realizable volume polynomial over any field $\kk$.
Let $\boldsymbol y'$ be the vector $(y_1, \dotsc, y_1, y_2, \dotsc, y_2, \dotsc, y_n, \dotsc, y_n)$ where $y_i$ is repeated $N_i$ many times.
For any nonnegative vectors $\boldsymbol a\in \RR^n_{\geq 0}$ and $\boldsymbol b\in \RR_{\geq 0}^N$, the nonnegative linear change of variables $sym_{\mathcal I_{\widetilde{\mathcal S}}}(\boldsymbol a \cdot \boldsymbol y, \boldsymbol b \cdot \boldsymbol{ y'}, \boldsymbol u)$ is a volume polynomial over any field.  Let $T_{\boldsymbol a, \boldsymbol b}$ be the linear operator with symbol $sym_{\mathcal I_{\widetilde{\mathcal S}}}(\boldsymbol a \cdot \boldsymbol y, \boldsymbol b \cdot \boldsymbol{ y'}, \boldsymbol u)$. We find that $T_{\mathbf 1,0} = \mathcal I_{\mathcal S}$ and $T_{0,\mathbf 1} = A_{\mathcal S}$.  Thus, any path $(\mathbf{1},0)\rightsquigarrow(0,\mathbf{1})\subseteq \RR_{\geq 0}^n\times \RR_{\geq 0}^{N}$ yields the desired 1-parameter family.

\begin{rem}
In \cite{KMT}, the authors study the analogue of the induced polymatroid construction for M-convex functions.  There, the key strategy is to consider a bipartite graph as a composition of simple bipartite graphs, called elementary splits and elementary aggregations.  The same strategy cannot be adopted to study $\mathcal I_{\mathcal S}$ because the composition of the operators of the form $\mathcal I_{\mathcal S}$ does not respect composition of bipartite graphs, as the following example shows.
\end{rem}

\begin{eg}
    Take vertex sets $U=\{1,2\}$, $V=\{1_a,1_b,2_a,2_b\}$, and $W=\{a,b\}$, with sequences
    \[
        \mathcal S_1=(\{1\},\{1\},\{2\},\{2\}) \text{ on } U
        \qquad\text{and}\qquad
        \mathcal S_2=(\{1_a,2_a\},\{1_b,2_b\}) \text{ on } V.
    \]
The bipartite graphs $\Gamma_{\mathcal S_1}$ and $\Gamma_{\mathcal S_2}$ are illustrated respectively as the {middle and right parts} of the figure below.  Each of the two consists only of elementary splits and elementary aggregations in the sense of \cite{KMT}. The composition $\Gamma_{\mathcal S_2}\circ\Gamma_{\mathcal S_1}$, obtained by forgetting the bivalent nodes in the middle layer, is the complete bipartite graph $K_{2,2}$ between $U$ and $W$.
    \begin{figure}[H]
        \centering
        \begin{tikzpicture}[scale=0.85]
    \draw [NavyBlue,thick] (0,4) -- (3,4);
    \draw [NavyBlue,thick] (0,1) -- (3,1);
    \draw [NavyBlue,thick] (0,1) -- (3,4);
    \draw [NavyBlue,thick] (0,4) -- (3,1);
    
    \node [NavyBlue,circle, fill, inner sep=1.5pt] at (0,4) {};
    \node [NavyBlue,circle, fill, inner sep=1.5pt] at (0,1) {};
    \node [NavyBlue,circle, fill, inner sep=1.5pt] at (3,4) {};
    \node [NavyBlue,circle, fill, inner sep=1.5pt] at (3,1) {};
    \node [NavyBlue,left] at (0,4) {{$1$}};
    \node [NavyBlue,left] at (0,1) {{$2$}};
    \node [NavyBlue,right] at (3,4) {{$a$}};
    \node [NavyBlue,right] at (3,1) {{$b$}};

    \node at (4,2.5) {{$=$}};
    \draw [NavyBlue,thick] (8,2) -- (5,4);
    \draw [NavyBlue,thick] (8,4) -- (5,4);
    \draw [NavyBlue,thick] (8,1) -- (5,1);
    \draw [NavyBlue,thick] (8,3) -- (5,1);
    
    \node [NavyBlue,circle, fill, inner sep=1.5pt] at (8,1) {};
    \node [NavyBlue,circle, fill, inner sep=1.5pt] at (8,2) {};
    \node [NavyBlue,circle, fill, inner sep=1.5pt] at (8,3) {};
    \node [NavyBlue,circle, fill, inner sep=1.5pt] at (8,4) {};
    \node [NavyBlue,circle, fill, inner sep=1.5pt] at (5,4) {};
    \node [NavyBlue,circle, fill, inner sep=1.5pt] at (5,1) {};
    \node [NavyBlue,right] at (8,1) {{$2b$}};
    \node [NavyBlue,right] at (8,2) {{$1b$}};
    \node [NavyBlue,right] at (8,3) {{$2a$}};
    \node [NavyBlue,right] at (8,4) {{$1a$}};
    \node [NavyBlue,left] at (5,4) {{$1$}};
    \node [NavyBlue,left] at (5,1) {{$2$}};
    \draw [NavyBlue,thick] (11,2) -- (12.5,1);
    \draw [NavyBlue,thick] (11,3) -- (12.5,4);
    \draw [NavyBlue,thick] (11,4) -- (12.5,4);
    \draw [NavyBlue,thick] (11,1) -- (12.5,1);
    
    \node [NavyBlue,circle, fill, inner sep=1.5pt] at (11,1) {};
    \node [NavyBlue,circle, fill, inner sep=1.5pt] at (11,2) {};
    \node [NavyBlue,circle, fill, inner sep=1.5pt] at (11,3) {};
    \node [NavyBlue,circle, fill, inner sep=1.5pt] at (11,4) {};
    \node [NavyBlue,circle, fill, inner sep=1.5pt] at (12.5,4) {};
    \node [NavyBlue,circle, fill, inner sep=1.5pt] at (12.5,1) {};
    \node [NavyBlue,left] at (11,1) {{$2b$}};
    \node [NavyBlue,left] at (11,2) {{$1b$}};
    \node [NavyBlue,left] at (11,3) {{$2a$}};
    \node [NavyBlue,left] at (11,4) {{$1a$}};
    \node [NavyBlue,right] at (12.5,4) {{$a$}};
    \node [NavyBlue,right] at (12.5,1) {{$b$}};
    \node at (9.43,2.5) {{$\longrightarrow$}};
        \end{tikzpicture}
        \caption{The composition visualized}
    \end{figure}

\noindent We find that $\mathcal I_{\mathcal S_2 \circ \mathcal S_1}(x_1x_2)=z_a z_b+z_a^{[2]}+z_b^{[2]}.$
On the other hand, $(\mathcal I_{\mathcal S_2}\circ\mathcal I_{\mathcal S_1})(x_1x_2)=2z_a z_b+z_a^{[2]}+z_b^{[2]}$. The extra coefficient on $z_a z_b$ comes from two distinct intermediate degree vectors on $V$ mediating the $(\mathcal{S}_2\circ\mathcal{S}_1)$-matching $(1,1)\longleftrightarrow(1,1)$, namely $(1,0,0,1)$ and $(0,1,1,0)$.
\end{eg}

\section*{Acknowledgements}
We thank American Institute of Mathematics for hosting the workshop ``The geometry of polynomials in combinatorics and sampling'' during which the first author proposed this project. We also thank the referees for their careful reading and valuable feedback. CE is supported by US NSF grant DMS-2246518. NN is supported by NSF Grant DMS-2452179 and Simons Foundation Travel Support for Mathematicians Award MPS-TSM00007970. DQ is partially supported by US NSF grant CCF-2317280.
{\fontsize{10.5}{11}\selectfont \bibliographystyle{alpha}
  \bibliography{references}  }
\end{document}